\UseRawInputEncoding
\documentclass[12pt]{article}

\usepackage{amsfonts,amsmath,amsthm}
\usepackage{amssymb}
\usepackage{xcolor}
\usepackage{graphicx}
\usepackage{stmaryrd}
\usepackage{dsfont}

\usepackage[paper=a4paper,dvips,top=2cm,left=2cm,right=2cm,
foot=1cm,bottom=4cm]{geometry}

\newcommand{\vx}{{\bf x}}
\newcommand{\vy}{{\bf y}}

\newtheorem{Thm}{Theorem}[section]
\newtheorem{Def}[Thm]{Definition}
\newtheorem{Lem}[Thm]{Lemma}
\newtheorem{Prop}[Thm]{Proposition}

\newtheorem{remark}[Thm]{Remark}

\begin{document}

\title{Narrowing the Gap: SOS Ranks of \(4 \times 3\) Biquadratic Forms and a Lower Bound of $8$}

\author{Yi Xu\footnote{School of Mathematics, Southeast University, Nanjing  211189, China. Nanjing Center for Applied Mathematics, Nanjing 211135,  China. Jiangsu Provincial Scientific Research Center of Applied Mathematics, Nanjing 211189, China. ({\tt yi.xu1983@hotmail.com})}
		\and
		Chunfeng Cui\footnote{School of Mathematical Sciences, Beihang University, Beijing  100191, China.
			({\tt chunfengcui@buaa.edu.cn})}
		\and {and \
			Liqun Qi\footnote{
				Department of Applied Mathematics, The Hong Kong Polytechnic University, Hung Hom, Kowloon, Hong Kong.
				({\tt maqilq@polyu.edu.hk})}
		}
	}

\date{\today}
\maketitle

\begin{abstract}
We investigate the maximum sum-of-squares (SOS) rank of biquadratic forms in the critical case of \(4 \times 3\) variables, where the general bounds are currently \(7 \leq \mathrm{BSR}(4,3) \leq 11\). By analyzing two important structured subclasses, we obtain exact determinations and improved upper bounds that significantly narrow this gap.

For simple biquadratic forms  those containing only distinct terms of the type \(x_i^2 y_j^2\)  we prove that the maximum achievable SOS rank is exactly 7, a value attained by a form corresponding to a \(C_4\)-free bipartite graph with the maximum number of edges. This settles the question for simple forms.

For \(y\)-deficient biquadratic forms  a class introduced here that permits cross terms among two of the three \(y\)-variables while the third appears only in pure square terms  we prove an upper bound of 9 by combining Calder\"{o}n's theorem on \(m\times 2\) forms with the known value \(\mathrm{BSR}(4,2) = 5\).

Our main result is a constructive proof that \(\mathrm{BSR}(4,3) \geq 8\). We present an explicit non-simple, non-deficient \(4\times 3\) biquadratic form and prove it requires exactly eight squares, thereby improving the general lower bound. This shows that any form achieving a rank higher than 8 must possess a more complex algebraic structure, and it reduces the search space for determining the true value of \(\mathrm{BSR}(4,3)\). Connections to Zarankiewicz numbers, extremal graph theory, and classical results on sums of squares are highlighted throughout.

\medskip

\textbf{Keywords.} Biquadratic forms, sum-of-squares, SOS rank, simple forms, \(y\)-deficient forms, diagonal forms, positive semidefinite, Zarankiewicz number.

\medskip
\textbf{AMS subject classifications.} 11E25, 12D15, 14P10, 15A69, 90C23.
\end{abstract}

\section{Introduction}
Let $m, n \ge 2$.
A biquadratic form in variables $\vx = (x_1,\dots,x_m)$ and $\vy = (y_1,\dots,y_n)$ is a homogeneous polynomial
\[
P(\vx,\vy) = \sum_{i,k=1}^{m} \sum_{j,l=1}^{n} a_{ijkl} x_i x_k y_j y_l,
\]
with real coefficients $a_{ijkl}$.
It is called \emph{positive semidefinite (PSD)} if $P(\vx,\vy) \ge 0$ for all $\vx,\vy$, and \emph{sum-of-squares (SOS)} if it can be written as a finite sum of squares of bilinear forms.
The smallest number of squares required is the \emph{SOS rank} of $P$, denoted $sos(P)$.   Let $BSR(m, n)$ be the maximum SOS rank of $m \times n$ SOS biquadratic forms     Currently, we know the following about $BSR(m, n)$.

\begin{itemize}
    \item $BSR(m, 2) = m+1$, i.e., $BSR(2, n) = n+1$ \cite{BPSV19}.
    \item $BSR(3, 3) = 6$ \cite{BSSV22}
    \item $BSR(m, n) \le mn-1$ \cite{QCX26}.
    \item $BSR(m, n) \ge z(m, n)$, where $z(m, n)$ is the Zarankiewicz number \cite{CQX26}.
\end{itemize}

Thus, the current frontier is on \(\mathrm{BSR}(4,3)\). As \(z(4,3) = 7\), we now know that \(7 \leq \mathrm{BSR}(4,3) \leq 11\). In this paper, we make substantial progress in narrowing this gap.

First, we conduct a systematic analysis of two fundamental subclasses. For \textbf{simple biquadratic forms} (Section 2), we leverage their connection to bipartite graphs and Zarankiewicz numbers to prove that the maximum SOS rank is exactly 7. For \textbf{\(y\)-deficient biquadratic forms} (Section 3), a new class introduced here, we use a splitting argument to establish an upper bound of 9.

Our main contribution is a constructive improvement of the general lower bound. In Section 4, we present an explicit \(4\times 3\) biquadratic form that is neither simple nor \(y\)-deficient and prove that its SOS rank is exactly 8 (Theorem 4.1). This result, \(\mathrm{BSR}(4,3) \geq 8\), is the central finding of our work. It demonstrates that the maximum rank for the general case is at least 8 and implies that any form achieving a rank of 9 or higher must possess a more complex algebraic structure than those considered here.

The remainder of this paper is organized as follows. In Section 2, we focus on simple biquadratic forms and prove that the maximum SOS rank for \(4\times 3\) simple forms is exactly 7. In Section 3, we introduce the class of \(y\)-deficient biquadratic forms, which may contain cross terms, and show that their SOS rank is at most 9. Diagonal forms appear as a special case, and we also mention an alternative proof using a row split. In Section 4, we construct and analyze the form \(Q\), proving it requires eight squares and establishing the new lower bound \(\mathrm{BSR}(4,3) \ge 8\). Finally, in Section 5, we conclude with a summary of our findings and a discussion of open problems and future research directions.

\section{Simple Biquadratic Forms}
Let $m \ge n$.
A biquadratic form is called \textbf{simple} if it contains only distinct terms of the type $x_i^2 y_j^2$.
We define a family of simple forms $P_{m,n,s}$ where $s = 1,\dots,mn$ counts the number of square terms, following a fixed ordering of index pairs $(i,j)$ (see \cite{QCX26} for details).
For $m=4$ and $n=3$ the first seven forms are:
\[
\begin{aligned}
P_{4,3,1} &= x_1^2 y_1^2, \\
P_{4,3,2} &= x_1^2 y_1^2 + x_2^2 y_2^2, \\
P_{4,3,3} &= x_1^2 y_1^2 + x_2^2 y_2^2 + x_3^2 y_3^2, \\
P_{4,3,4} &= x_1^2 y_1^2 + x_2^2 y_2^2 + x_3^2 y_3^2 + x_1^2 y_2^2, \\
P_{4,3,5} &= x_1^2 y_1^2 + x_2^2 y_2^2 + x_3^2 y_3^2 + x_1^2 y_2^2 + x_2^2 y_3^2, \\
P_{4,3,6} &= x_1^2 y_1^2 + x_2^2 y_2^2 + x_3^2 y_3^2 + x_1^2 y_2^2 + x_2^2 y_3^2 + x_3^2 y_1^2, \\
P_{4,3,7} &= x_1^2 y_1^2 + x_2^2 y_2^2 + x_3^2 y_3^2 + x_1^2 y_2^2 + x_2^2 y_3^2 + x_3^2 y_1^2 + x_4^2 y_1^2.
\end{aligned}
\]
Each of them corresponds to a $4 \times 3$ bipartite graph $G = (S, T, E)$, where the vertex sets $S = \{ 1, 2, 3, 4 \}$ and $T = \{ 1, 2, 3 \}$, and $E$ is the edge set.   For $P_{4, 3, 7}$, its edge set $E = \{ (1, 1), (2, 2), (3, 3), (1, 2), (2, 3), (3, 1), (4, 1) \}$.    All of these seven bipartite graphs are $C_4$-free, i.e., none of them contains a $C_4$ cycle $C_{ijkl} \equiv \{ (i, j), (i, l), (k, j), (k, l) \}$ with $i \neq k$ and $j \neq l$.   By \cite{CQX26}, the SOS rank of $P_{4,3,p}$ equals the size of its edge set.  In particular, as the bipartite graph corresponding to $P_{4,3,7}$ is a $4 \times 3$ bipartite graph with the maximum $C_4$-free edge set, by \cite{CQX26} we have the following theorem.

\begin{Thm}[A $4 \times 3$ simple form requiring seven squares]\label{thm:4x3-tight}
The form
\[
P_{4,3,7}(\vx,\vy) = x_1^2 y_1^2 + x_2^2 y_2^2 + x_3^2 y_3^2 + x_1^2 y_2^2 + x_2^2 y_3^2 + x_3^2 y_1^2 + x_4^2 y_1^2
\]
satisfies $\operatorname{sos}(P_{4,3,7}) = z(4,3) = 7$.
\end{Thm}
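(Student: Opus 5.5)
The proof splits into an upper bound, a lower bound, and a check that the graph is extremal.

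The upper bound $\operatorname{sos}(P_{4,3,7})\le 7$ is immediate. The form is written as a sum of seven squares $(x_iy_j)^2$, one for each edge $(i,j)\in E$ of the associated bipartite graph.

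For the lower bound, I would argue directly, so as not to lean entirely on \cite{CQX26}. Suppose $P_{4,3,7}=\sum_{k=1}^r f_k^2$ with bilinear forms $f_k=\sum_{i,j} c^{(k)}_{ij}x_iy_j$. Comparing the coefficient of $x_i^2y_j^2$ for a non-edge $(i,j)$ gives $\sum_k (c^{(k)}_{ij})^2=0$. Hence every $f_k$ is supported on the edge set $E$. Write $v_{ij}=(c^{(k)}_{ij})_{k=1}^r\in\mathbb{R}^r$ for $(i,j)\in E$. Matching the remaining monomials gives two things:
\begin{itemize}
\item the coefficient of $x_i^2y_j^2$ gives $\|v_{ij}\|^2=1$;
\item the coefficient of each cross monomial $x_ix_ky_jy_l$ gives a linear combination of inner products $\langle v_{ij},v_{kl}\rangle$ that must vanish.
\end{itemize}
The key point is that $C_4$-freeness makes each such combination a single term. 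For $i\ne k$ and $j\ne l$, the monomial $x_ix_ky_jy_l$ collects $2\langle v_{ij},v_{kl}\rangle+2\langle v_{il},v_{kj}\rangle$. Both pairs can be present only if all four edges $(i,j),(i,l),(k,j),(k,l)$ lie in $E$, i.e.\ a $C_4$. Since $G$ is $C_4$-free, at most one pair occurs, and its inner product must vanish. For $i=k$, $j\ne l$, the monomial $x_i^2y_jy_l$ gives $\langle v_{ij},v_{il}\rangle=0$. Symmetrically, for $j=l$ we get $\langle v_{ij},v_{kj}\rangle=0$. Therefore the seven vectors $v_{ij}$, $(i,j)\in E$, are pairwise orthogonal unit vectors in $\mathbb{R}^r$, which forces $r\ge 7$.

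It remains to confirm that $E$ is $C_4$-free and has the extremal size $z(4,3)=7$. For $C_4$-freeness, I would list the $y$-neighbourhoods of the rows: $\{1,2\}$, $\{2,3\}$, $\{1,3\}$, $\{1\}$. No two rows share two columns, so there is no $C_4$.

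For $z(4,3)\le 7$, a standard counting argument works. A $C_4$-free graph has at most one row containing any given pair of columns, and there are $\binom32=3$ such pairs. Hence $\sum_i\binom{d_i}{2}\le 3$, where $d_i$ is the degree of row $i$. Suppose there were $8$ edges on $4$ rows. Each row has degree at most $3$, and a row of degree $3$ already uses up all three pairs. If some $d_i=3$, every other row has degree at most $1$, giving at most $3+3=6$ edges. Otherwise all $d_i\le2$, so $8$ edges force every row to have degree $2$, and then $\sum_i\binom{d_i}{2}=4>3$. Either case is a contradiction, so $z(4,3)=7$, which equals the edge count of $P_{4,3,7}$.

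The only step needing care is the coefficient bookkeeping for the cross monomials, specifically that $C_4$-freeness leaves a single inner product in each case. I expect this to be routine once the cases $i=k$, $j=l$, and neither are separated as above.
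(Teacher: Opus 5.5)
Your proof is correct and complete. It differs from the paper in being self-contained: the paper does not argue this theorem in-line, but invokes \cite{CQX26} for two facts. The first is that the SOS rank of a simple form whose bipartite graph is $C_4$-free equals its number of edges. The second is that the graph of $P_{4,3,7}$ is an extremal $C_4$-free $4\times 3$ graph, with $z(4,3)=7$ taken as known from the introduction. You prove both ingredients directly.

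Your lower bound uses the same orthogonal-coefficient-vector device the paper uses later in the proof of Theorem~\ref{thm:Q_rank8}. There, the zero vectors $\mathbf{v}_{21},\mathbf{v}_{32},\mathbf{v}_{43}$ are checked case by case to kill the second inner product in each cross-monomial coefficient. Here $C_4$-freeness kills it uniformly. Your argument therefore establishes the general statement (SOS rank equals edge count) for every $C_4$-free simple form, not only $P_{4,3,7}$.

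Your degree count $\sum_i\binom{d_i}{2}\le\binom{3}{2}$ likewise gives $z(4,3)\le 7$ without appeal to the literature. The paper's route is shorter; yours shows exactly where $C_4$-freeness enters. The only blemish is that you reuse $k$ both as the index of the squares and as a row index in $x_ix_ky_jy_l$.
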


To bound the SOS rank of simple forms with more terms, we use a combinatorial observation.
For $i\neq k$ and $j\neq l$ define
\[
P_{ijkl}(\vx,\vy) = x_i^2 y_j^2 + x_k^2 y_l^2 + x_i^2 y_l^2 + x_k^2 y_j^2 = (x_i y_j + x_k y_l)^2 + (x_i y_l - x_k y_j)^2,
\]
so $\operatorname{sos}(P_{ijkl}) = 2$.

\begin{Lem}\label{lem:contains_Tij}
Every simple biquadratic form in variables $(x_1,x_2,x_3,x_4)$ and $(y_1,y_2,y_3)$ containing at least $8$ distinct terms $x_a^2 y_b^2$ contains $C_{ijkl}$ for some $i\neq k$, $j\neq l$.
\end{Lem}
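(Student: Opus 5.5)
We argue purely combinatorially, using the correspondence with bipartite graphs set up before Theorem~\ref{thm:4x3-tight}. A simple form with at least $8$ distinct terms $x_a^2y_b^2$ gives a bipartite graph $G=(S,T,E)$ with $|S|=4$, $|T|=3$ and $|E|\ge 8$. We must show that $E$ contains some $C_{ijkl}=\{(i,j),(i,l),(k,j),(k,l)\}$. This is equivalent to finding two rows $i\neq k$ in $S$ that have two common neighbours $j\neq l$ in $T$. The plan is a double-counting (pigeonhole) argument over pairs of columns.

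First, for each $i\in S$ let $d_i$ be the number of $b$ with $(i,b)\in E$, so $\sum_i d_i=|E|\ge 8$ and $0\le d_i\le 3$. Row $i$ then covers $\binom{d_i}{2}$ unordered pairs $\{j,l\}\subset T$. There are only $\binom{3}{2}=3$ such pairs. If $\sum_i\binom{d_i}{2}\ge 4$, two distinct rows must cover the same pair $\{j,l\}$, and this gives the desired $C_{ijkl}$.

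Second, we minimize $\sum_i\binom{d_i}{2}$ subject to $\sum_i d_i\ge 8$ and $d_i\in\{0,1,2,3\}$. By convexity of $d\mapsto\binom{d}{2}$, the minimum is attained at the most balanced degree sequence, namely $d=(2,2,2,2)$, which gives $4\cdot 1=4$. Sequences such as $(3,2,2,1)$ give $3+1+1+0=5$, and $(3,3,1,1)$ or $(3,3,2,0)$ give at least $6$. For robustness we will spell this out as a short case check rather than rely on convexity alone: if some $d_i=3$ that row contributes $3$, and the remaining degrees sum to at least $5$ over three rows with entries at most $3$, so at least one of them is $\ge 2$, contributing at least $1$ more. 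If no $d_i=3$, then all $d_i\le 2$ with sum $\ge 8$, which forces all $d_i=2$ and a total of $4$. In every case the total is at least $4>3$, and pigeonhole finishes the proof.

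The only step that needs any care is the minimization in the second step, since it is exactly where the threshold $8$ enters. With $7$ edges the degrees $(2,2,2,1)$ give only $3$ pairs, which is consistent with the $C_4$-free extremal example $P_{4,3,7}$ of Theorem~\ref{thm:4x3-tight}. The rest is a routine translation of $C_{ijkl}\subseteq E$ back to the four terms $x_i^2y_j^2, x_i^2y_l^2, x_k^2y_j^2, x_k^2y_l^2$ of the form.
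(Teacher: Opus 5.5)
Your proof is correct, and it takes a different route from the paper's. The paper gives no written proof of this lemma. It relies on the value $z(4,3)=7$ of the Zarankiewicz number, quoted from \cite{CQX26} and restated in the discussion of $P_{4,3,7}$: a $C_4$-free $4\times 3$ bipartite graph has at most $7$ edges, so any graph with $8$ or more edges must contain some $C_{ijkl}$.

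You instead prove the needed inequality $z(4,3)\le 7$ from scratch. You use the standard K\H{o}v\'ari--S\'os--Tur\'an double count: in a $C_4$-free graph each of the $\binom{3}{2}=3$ column pairs is covered by at most one row, so $\sum_i\binom{d_i}{2}\le 3$. You then show that $\sum_i d_i\ge 8$ with $d_i\le 3$ forces $\sum_i\binom{d_i}{2}\ge 4$. Your case check is sound. If some $d_i=3$, the other three degrees sum to at least $5$, so one of them is at least $2$. If every $d_i\le 2$, all four equal $2$. The pigeonhole step is also valid, since a single row counts each pair at most once.

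Your version is self-contained and shows exactly where the threshold $8$ enters, namely the jump from $(2,2,2,1)$ to $(2,2,2,2)$. The paper's appeal to $z(4,3)=7$ is shorter and keeps the section tied to the Zarankiewicz framework. That framework also carries the matching tightness statement, that $7$ edges can be $C_4$-free. Your counting does not give this by itself, but you correctly recover it from the example $P_{4,3,7}$ of Theorem~\ref{thm:4x3-tight}.
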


\begin{Prop}\label{prop:8-9term}
Any eight-term $4 \times 3$ simple biquadratic form has $\operatorname{sos}(P) \le 6$.
Any nine-term $4 \times 3$ simple biquadratic form has $\operatorname{sos}(P) \le 7$.
\end{Prop}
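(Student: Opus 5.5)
The plan is to split off one $4$-cycle and bound the rest term by term, using subadditivity of the SOS rank: $\operatorname{sos}(A+B)\le \operatorname{sos}(A)+\operatorname{sos}(B)$ for SOS forms $A,B$. Let $P$ be a simple $4\times 3$ form with $8$ or $9$ distinct terms $a_{ab}x_a^2y_b^2$, all coefficients positive. By Lemma~\ref{lem:contains_Tij}, the support of $P$ contains a cycle $C_{ijkl}$ with $i\ne k$, $j\ne l$. Write
\[
P = \bigl(\alpha x_i^2y_j^2+\beta x_k^2y_l^2+\gamma x_i^2y_l^2+\delta x_k^2y_j^2\bigr) + R,
\]
where $R$ is the sum of the remaining $4$ terms (eight-term case) or $5$ terms (nine-term case). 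Each term of $R$ is a single square. If the four-term block $B$ can be written with two squares, the bound is $2+4=6$, respectively $2+5=7$.

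The only point needing care is that the coefficients need not all equal $1$, so the identity for $P_{ijkl}$ in the text does not apply verbatim. Try
\[
(a\,x_iy_j+b\,x_ky_l)^2+(c\,x_iy_l-d\,x_ky_j)^2 .
\]
It produces the diagonal terms $a^2,b^2,c^2,d^2$ and the cross term $2(ab-cd)\,x_ix_ky_jy_l$. The cross term vanishes exactly when $ab=cd$, that is, when $\alpha\beta=\gamma\delta$. In general this need not hold, so I would shave off the excess first.

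Suppose first that $\alpha\beta\ge\gamma\delta$. Set $\beta'=\gamma\delta/\alpha\le\beta$. Then
\[
\alpha x_i^2y_j^2+\beta' x_k^2y_l^2+\gamma x_i^2y_l^2+\delta x_k^2y_j^2
\]
is a sum of two squares, with $a=\sqrt\alpha$, $b=\sqrt{\beta'}$, $c=\sqrt\gamma$, $d=\sqrt\delta$. The leftover $(\beta-\beta')x_k^2y_l^2$ is a single square. If $\beta'<\beta$, this adds one square, so the count would become $3$ instead of $2$ for the block. This must be avoided, so I would instead choose the reduction to land on a monomial already present in $R$, or absorb the leftover differently.

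The cleaner route is to reduce the \emph{other} pair instead. If $\alpha\beta\ge\gamma\delta$, increase nothing: write $\gamma'=\alpha\beta/\delta\ge\gamma$? That overshoots, so it is not allowed. The honest fix is to note that the leftover $(\beta-\beta')x_k^2y_l^2$ together with the block still totals three squares. The two-square claim then requires $\alpha\beta=\gamma\delta$.

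Hence I expect the main obstacle to be exactly this coefficient issue. I would resolve it by reading "simple form" as having unit coefficients, as in all the forms $P_{m,n,s}$ of \cite{QCX26}. In that case $\alpha=\beta=\gamma=\delta=1$ and the identity $P_{ijkl}=(x_iy_j+x_ky_l)^2+(x_iy_l-x_ky_j)^2$ applies directly. The count is then immediate: $\operatorname{sos}(P)\le 2+4=6$ for eight terms and $\le 2+5=7$ for nine terms. If general positive coefficients are intended, I would rescale variables to normalize the coefficients. Replacing $x_a\mapsto \lambda_a x_a$ and $y_b\mapsto\mu_b y_b$ does not change the SOS rank, and I would choose the scalings to make the four coefficients on the chosen $C_4$ satisfy $\alpha\beta=\gamma\delta$. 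This is possible only if the cycle's invariant $\alpha\beta/(\gamma\delta)$ equals $1$, since that ratio is invariant under these scalings. If it fails, I would look for a different $C_4$ in the support. An $8$-edge subgraph of $K_{4,3}$ typically contains several, as the proof of Lemma~\ref{lem:contains_Tij} suggests.
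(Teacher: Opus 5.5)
Once you adopt the unit-coefficient reading of ``simple'' (which is the paper's convention: simple forms are identified with bipartite graphs, i.e.\ all coefficients equal $1$), your argument is exactly the paper's proof. You extract a $C_4$ via Lemma~\ref{lem:contains_Tij}, write its four terms as two squares by the $P_{ijkl}$ identity, and count the remaining $4$ or $5$ terms as single squares. Your closing speculation about general positive coefficients is unnecessary, and would not work in general anyway, since a $C_4$ block with $\alpha\beta\neq\gamma\delta$ genuinely has SOS rank $3$.
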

\begin{proof}
For an eight-term form, Lemma~\ref{lem:contains_Tij} guarantees a 4-cycle $C_{ijkl}$. The corresponding four terms can be written as two squares via the identity above, and the remaining four terms are individual squares, giving at most six squares. For a nine-term form, the graph has nine edges and therefore still contains a 4-cycle. Applying the same decomposition yields $2 + 5 = 7$ squares.
\end{proof}

We now recall the classical theorem of Hurwitz on sums of squares.

\begin{Prop}[Hurwitz-type decomposition for $3\times3$ all-ones form]\label{prop:3x3allones}
The nine-term form $Q_{3,3}(\vx,\vy) = \sum_{i=1}^3 \sum_{j=1}^3 x_i^2 y_j^2$ satisfies $\operatorname{sos}(Q_{3,3}) \le 4$.
\end{Prop}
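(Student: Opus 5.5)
The approach is to recognize $Q_{3,3}$ as a product of two sums of squares and to write it with four bilinear squares via the Lagrange identity (equivalently, the quaternion/Euler four-square identity with the real parts set to zero). First I observe the factorization
\[
Q_{3,3}(\vx,\vy)=\sum_{i=1}^3\sum_{j=1}^3 x_i^2y_j^2=(x_1^2+x_2^2+x_3^2)(y_1^2+y_2^2+y_3^2)=\|\vx\|^2\|\vy\|^2 .
\]
Next I invoke the classical identity $\|\vx\|^2\|\vy\|^2=(\vx\cdot\vy)^2+\|\vx\times\vy\|^2$ for vectors in $\mathbb{R}^3$. Written out, this is
\[
Q_{3,3}=(x_1y_1+x_2y_2+x_3y_3)^2+(x_2y_3-x_3y_2)^2+(x_3y_1-x_1y_3)^2+(x_1y_2-x_2y_1)^2 .
\]
Each of the four polynomials being squared is bilinear in $(\vx,\vy)$, so this exhibits $Q_{3,3}$ as a sum of four squares of bilinear forms. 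Hence $\operatorname{sos}(Q_{3,3})\le 4$.

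The only step needing checking is the identity itself. Expanding the right-hand side, the diagonal contributions $x_i^2y_j^2$ all appear with coefficient $1$. These come from $x_i^2y_i^2$ in the first square and from $x_i^2y_j^2$ with $i\ne j$ in exactly one cross-product square. It remains to see that the mixed terms cancel. The first square contributes $+2x_iy_ix_jy_j$ for each pair $i<j$. The cross-product square involving the indices $i,j$ contributes $-2x_iy_jx_jy_i$, which is the same monomial with the opposite sign, so the two cancel.

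To connect with Hurwitz, note that this is exactly the norm multiplicativity of quaternions, $|pq|=|p||q|$, restricted to pure quaternions $p=x_1\mathbf i+x_2\mathbf j+x_3\mathbf k$ and $q=y_1\mathbf i+y_2\mathbf j+y_3\mathbf k$. In that case $pq=-\vx\cdot\vy+\vx\times\vy$, whose four real components are bilinear.

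I expect no real obstacle; the only care needed is the sign bookkeeping in the cross-term cancellation. If one also wanted equality $\operatorname{sos}(Q_{3,3})=4$, that would require a separate lower-bound argument, but the proposition only asks for the upper bound.
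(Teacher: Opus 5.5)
Your proof is correct and matches the paper's: the paper writes down exactly the same four squares, $(x_1y_1+x_2y_2+x_3y_3)^2+(x_2y_3-x_3y_2)^2+(x_3y_1-x_1y_3)^2+(x_1y_2-x_2y_1)^2$, and ties it to Hurwitz and the quaternion norm. Your explicit check that the mixed terms $x_ix_jy_iy_j$ cancel in pairs fills in the expansion the paper only asserts.
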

\begin{proof}
The following explicit decomposition uses only four squares:
\[
Q_{3,3}(\vx,\vy) = (x_1 y_1 + x_2 y_2 + x_3 y_3)^2 + (x_2 y_3 - x_3 y_2)^2 + (x_3 y_1 - x_1 y_3)^2 + (x_1 y_2 - x_2 y_1)^2.
\]
Expanding verifies that all cross terms cancel, yielding precisely the nine square terms. This identity is a manifestation of Hurwitz's theorem \cite{Hurwitz1898} on the composition of quadratic forms; it also corresponds to the norm in the quaternions.
\end{proof}

The structure of bipartite graphs with ten or eleven edges is more constrained. The following structural fact is proved in \cite[Lemma~4.3]{CQX26}: every $4\times3$ bipartite graph with ten edges either contains two vertex-disjoint $C_4$ cycles or contains a complete bipartite subgraph $K_{3,3}$ (on three of the four rows and all three columns). With eleven edges the graph necessarily contains a $K_{3,3}$.

\begin{Prop}\label{prop:10-11term}
Any ten-term $4 \times 3$ simple biquadratic form has $\operatorname{sos}(P) \le 6$.
Any eleven-term $4 \times 3$ simple biquadratic form has $\operatorname{sos}(P) \le 6$.
\end{Prop}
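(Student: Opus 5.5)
The plan is to use the structural fact quoted from \cite[Lemma~4.3]{CQX26} and split into cases according to which substructure the bipartite graph $G$ of $P$ contains. Decomposing a fixed group of terms into few squares and writing every remaining term $x_a^2y_b^2$ as its own square gives an upper bound on $\operatorname{sos}(P)$. This is valid because $P$ is the sum of its terms, and the SOS rank is subadditive over any partition of these terms.

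\emph{Ten terms.} By the structural fact, $G$ either contains two vertex-disjoint $C_4$ cycles or contains a $K_{3,3}$.

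In the first case, let the two cycles be $C_{ijkl}$ and $C_{i'j'k'l'}$. Their edge sets are disjoint, so together they account for $8$ of the $10$ terms. Applying the identity for $P_{ijkl}$ to each cycle gives $2+2$ squares, and the two leftover terms give two more. Hence $\operatorname{sos}(P)\le 2+2+2=6$.

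In the second case, $G$ contains a $K_{3,3}$ on rows $\{a,b,c\}$ and all three columns. The nine corresponding terms form a copy of $Q_{3,3}$ after relabeling the $x$-variables. By Proposition~\ref{prop:3x3allones} they are a sum of $4$ squares. The one remaining term is a single square, so $\operatorname{sos}(P)\le 4+1=5\le 6$.

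\emph{Eleven terms.} Here $G$ necessarily contains a $K_{3,3}$, which gives $4$ squares by Proposition~\ref{prop:3x3allones}. The two remaining edges both lie in the fourth row $d$, say $(d,j)$ and $(d,l)$ with $j\ne l$. The bound $4+2=6$ follows immediately from the same splitting. If desired, one can sharpen this by observing that $x_d^2y_j^2+x_d^2y_l^2$ does not by itself collapse. Instead, one could pair these two terms with two terms of $K_{3,3}$ from a row $a$ to form $C_{ajdl}$, and handle the leftover seven terms of rows $b,c$ and row $a$'s third column. This refinement is not needed for the stated bound of $6$.

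The only genuine obstacle is the structural input, namely that every ten-edge graph contains either two \emph{vertex-disjoint} (hence edge-disjoint) $C_4$'s or a $K_{3,3}$, and that eleven edges force a $K_{3,3}$. I would take this from \cite[Lemma~4.3]{CQX26} as the paper does.

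If I wanted to check it directly, the eleven-edge case is quick. The complement has one missing edge, so three full rows remain and form a $K_{3,3}$. For ten edges, two cells are missing. If both missing cells lie in the same row, the other three rows form a $K_{3,3}$. Otherwise they lie in two different rows $r_1,r_2$, and the other two rows $r_3,r_4$ are full. Rows $r_3,r_4$ then give a $C_4$ on any two columns. For the second $C_4$, I would use rows $r_1,r_2$, each of which has two present edges. Since $n=3$, their present columns share at least one column. If they share two columns, say $j,l$, we obtain $C_{r_1jr_2l}$, which is vertex-disjoint in rows from the first cycle. Columns may repeat here, so disjointness of edges is what actually matters for the splitting, and it holds because the rows differ. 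If they share only one column, I would instead pair each of $r_1,r_2$ with one of the full rows $r_3,r_4$, choosing the columns so that the two cycles use disjoint edges. This small verification is the step that needs care.
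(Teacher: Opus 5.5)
Your proposal is correct and follows the paper's proof essentially verbatim. It uses the same case split from \cite[Lemma~4.3]{CQX26}, the $P_{ijkl}$ identity for each $4$-cycle, and the four-square identity of Proposition~\ref{prop:3x3allones} for the $K_{3,3}$ block, giving $2+2+2$, $4+1$ and $4+2$ squares. Your direct check of the structural input is a worthwhile addition: with only three columns, two genuinely vertex-disjoint $C_4$'s cannot exist, and your construction of two row-disjoint (hence edge-disjoint) $4$-cycles, when the two missing cells lie in different rows, is exactly what the splitting argument needs.
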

\begin{proof}
Let $P$ be a simple form corresponding to a bipartite graph $G$ with $|E|$ edges.

\noindent\textbf{Ten-term case ($|E| = 10$).}
By the structural lemma from \cite{CQX26}, two possibilities occur.

\begin{itemize}
  \item \textit{Two independent $C_4$'s.} Let the two 4-cycles be $C^1$ and $C^2$, each using four distinct vertices. Their edge sets are disjoint, and together they account for eight edges. The remaining two edges are isolated (they cannot create another 4-cycle without using vertices already in the cycles). Each 4-cycle contributes two squares (by the identity for $P_{ijkl}$), and the two remaining edges are single squares. Hence $\operatorname{sos}(P) \le 2 + 2 + 2 = 6$.
  \item \textit{Contains a $K_{3,3}$.} The $K_{3,3}$ subgraph (on three rows and all three columns) has nine edges. By Proposition~\ref{prop:3x3allones} this block can be written as four squares. The remaining single edge is a square term by itself. Thus $\operatorname{sos}(P) \le 4 + 1 = 5 \le 6$.
\end{itemize}

\noindent\textbf{Eleven-term case ($|E| = 11$).}
Only one edge is missing from the complete bipartite graph $K_{4,3}$. Let the missing edge be $(p,q)$. The three rows different from $p$ together with all three columns form a $K_{3,3}$ (all nine edges present). By Proposition~\ref{prop:3x3allones} this $3\times3$ block contributes four squares. The remaining edges are those incident to vertex $p$ except the missing one; there are exactly two such edges (since $p$ would be adjacent to the two columns other than $q$). Each is a single square. Therefore $\operatorname{sos}(P) \le 4 + 2 = 6$.
\end{proof}

Combining the results above we obtain the maximum SOS rank for $4\times3$ simple forms.

\begin{Thm}\label{thm:max_simple}
The maximum SOS rank of $4 \times 3$ simple biquadratic forms is $7$.
\end{Thm}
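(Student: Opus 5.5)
The argument splits into a lower bound and an upper bound. For the lower bound, Theorem~\ref{thm:4x3-tight} already exhibits the simple form $P_{4,3,7}$ with $\operatorname{sos}(P_{4,3,7}) = 7$, so the maximum over simple forms is at least $7$. For the upper bound, every $4\times 3$ simple form is $\sum_{(i,j)\in E} x_i^2y_j^2$ for some edge set $E$ of a $4\times 3$ bipartite graph. Here I read ``simple'' as in the paper: distinct terms $x_i^2y_j^2$, each with coefficient one, as in the identity for $P_{ijkl}$. If positive coefficients $c_{ij}$ were allowed, the same argument should go through, since the Hurwitz and $P_{ijkl}$ identities survive rescaling. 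I will case on $s=|E|\in\{1,\dots,12\}$.

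\textbf{Case $s\le 7$.} Writing each term as its own square gives $\operatorname{sos}(P)\le s\le 7$ trivially.

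\textbf{Cases $s=8,9,10,11$.} Propositions~\ref{prop:8-9term} and~\ref{prop:10-11term} give the bounds $6,7,6,6$ respectively.

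\textbf{Case $s=12$.} This is $\sum_{i=1}^4\sum_{j=1}^3 x_i^2y_j^2$, which the preceding propositions do not treat. I would split off rows $1,2,3$ as a $K_{3,3}$ block. By Proposition~\ref{prop:3x3allones} that block uses four squares, and the three terms $x_4^2y_j^2$ add three more, giving $\operatorname{sos}(P)\le 7$. Alternatively, one can pair the rows into two $2\times 3$ blocks. Each block $\sum_{i\in\{a,b\},\,j} x_i^2y_j^2$ equals $(x_a^2+x_b^2)(y_1^2+y_2^2+y_3^2)$, which the quaternion identity (padding $\vx$ with zeros in Proposition~\ref{prop:3x3allones}) writes as four squares. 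That gives $8$ overall, which is worse, so the first split is the one to use. Either way the bound $\le 7$ suffices.

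Combining the cases, every $4\times 3$ simple form has SOS rank at most $7$, and $P_{4,3,7}$ attains $7$, which proves the theorem.

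The main obstacle is making sure that Lemma~\ref{lem:contains_Tij} and the structural lemma from \cite{CQX26} really apply to every graph with the given number of edges. The propositions cover this, so what remains is to handle the full case $s=12$ explicitly, as above, rather than leave it implicit.
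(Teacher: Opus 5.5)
Your proof is correct and follows the paper's route: a case split on $s=|E|$, the trivial bound for $s\le 7$, Propositions~\ref{prop:8-9term} and~\ref{prop:10-11term} for $s=8,\dots,11$, and $P_{4,3,7}$ for the lower bound. It is more complete than the paper's proof, which stops at eleven terms and never treats the twelve-term form. Your $K_{3,3}$ split settles that case; setting $y_4=0$ in Euler's four-square identity would even give $4$ squares.

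Your caution about the structural lemma is justified. Two vertex-disjoint $C_4$'s cannot exist with only three columns, so the lemma as quoted cannot be literally right. The ten-term bound is easy to prove directly, though:
\begin{enumerate}
\item If both missing edges lie in one row, the other three rows form a $K_{3,3}$, giving $4+1$ squares.
\item Otherwise, each of the two deficient rows forms a $C_4$ with a different full row, on the two columns it still has. This leaves two single edges, so $2+2+1+1=6$ squares suffice.
\end{enumerate}

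Your aside about arbitrary positive coefficients $c_{ij}$ is false, so drop that remark. Rescaling $x_i\mapsto\alpha_i x_i$, $y_j\mapsto\beta_j y_j$ only yields coefficients of the form $\alpha_i^2\beta_j^2$. Now take a general block $a\,x_i^2y_j^2+b\,x_i^2y_l^2+c\,x_k^2y_j^2+d\,x_k^2y_l^2$ and suppose it is a sum of two squares, with coefficient vectors $\mathbf{v}_{ij},\mathbf{v}_{il},\mathbf{v}_{kj},\mathbf{v}_{kl}\in\mathbb{R}^2$. The absent monomials force $\mathbf{v}_{ij}\parallel\mathbf{v}_{kl}$ and $\mathbf{v}_{il}\parallel\mathbf{v}_{kj}$. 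The vanishing $x_ix_ky_jy_l$ coefficient then requires $\pm\sqrt{ad}\pm\sqrt{bc}=0$, that is, $ad=bc$. For instance, $x_1^2y_1^2+x_1^2y_2^2+x_2^2y_1^2+2x_2^2y_2^2$ needs three squares. This is exactly why the paper treats weighted diagonal forms separately in Section~3 and leaves their maximal rank open.
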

\begin{proof}
From Propositions~\ref{prop:8-9term} and \ref{prop:10-11term}, the SOS ranks for forms with $1$-$11$ terms are bounded as follows:
\begin{itemize}
    \item forms with up to $7$ terms achieve rank equal to the number of terms;
    \item eight-term forms have rank at most $6$;
    \item nine-term forms have rank at most $7$;
    \item ten- and eleven-term forms have rank at most $6$.
\end{itemize}
The form $P_{4,3,7}$ in Theorem~\ref{thm:4x3-tight} attains rank $7$, establishing the maximum.
\end{proof}

Hence, to find a $4 \times 3$ SOS biquadratic form with SOS rank greater than $7$, we must consider non-simple forms.

\section{$y$-Deficient Biquadratic Forms}

In this section we consider a natural subclass of biquadratic forms that admits a strong bound on the SOS rank via a splitting argument. Unlike simple forms, these forms may contain cross terms, but they are constrained in a way that allows us to reduce the problem to smaller formats.

\begin{Def}
A biquadratic form \(P(\mathbf{x},\mathbf{y})\) in variables \(\mathbf{x}=(x_1,\dots,x_m)\) and \(\mathbf{y}=(y_1,\dots,y_n)\) is called \textbf{\(y\)-deficient} if there exists an index \(j_0\in\{1,\dots,n\}\) such that no term of \(P\) contains the variable \(y_{j_0}\) together with any other \(y_j\) (\(j\neq j_0\)) or any product \(x_ix_k\) with \(i\neq k\). Equivalently, every term involving \(y_{j_0}\) must be of the pure square form \(x_i^2 y_{j_0}^2\).
\end{Def}

In other words, \(P\) can be written as
\[
P(\mathbf{x},\mathbf{y}) = P_1(\mathbf{x},\mathbf{y}') \;+\; y_{j_0}^2 \, T(\mathbf{x}),
\]
where \(\mathbf{y}'\) denotes the vector of the remaining \(n-1\) variables, \(P_1\) is a biquadratic form in \(\mathbf{x}\) and \(\mathbf{y}'\) (which may contain arbitrary cross terms among those variables), and
\[
T(\mathbf{x}) = \sum_{i=1}^{m} a_i x_i^2,\qquad a_i\ge 0,
\]
is a sum of squares of linear forms in \(\mathbf{x}\) (in fact a diagonal quadratic form).

We focus on the case \(m=4\), \(n=3\). Without loss of generality, let the deficient variable be \(y_3\). Then
\[
P(\mathbf{x},\mathbf{y}) = P_1(\mathbf{x},y_1,y_2) \;+\; y_3^2 \sum_{i=1}^{4} a_i x_i^2,\qquad a_i\ge 0,
\]
where \(P_1\) is a \(4\times 2\) biquadratic form.

\begin{Thm}\label{thm:y-deficient}
Every \(4\times 3\) PSD \(y\)-deficient biquadratic form (with deficiency in \(y_3\)) satisfies \(\operatorname{sos}(P)\le 9\).
\end{Thm}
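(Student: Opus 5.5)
Write $P = P_1(\mathbf{x},y_1,y_2) + y_3^2 T(\mathbf{x})$ with $T(\mathbf{x})=\sum_{i=1}^4 a_i x_i^2$, $a_i\ge 0$, as in the decomposition above. The plan is to treat the two pieces separately: show that $P_1$ is a PSD $4\times 2$ biquadratic form, represent it with at most $5$ squares using $\mathrm{BSR}(4,2)=5$, and write $y_3^2T$ with at most $4$ squares. Adding these gives at most $5+4=9$ squares.

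\textbf{Step 1 (PSD of $P_1$).} Setting $y_3=0$ in $P$ gives $P(\mathbf{x},y_1,y_2,0)=P_1(\mathbf{x},y_1,y_2)$. Since $P\ge 0$ everywhere, $P_1$ is PSD as a $4\times 2$ biquadratic form. Note also that nonnegativity of the $a_i$ is part of the setup; in any case it follows from PSD by taking $\mathbf{y}=(0,0,1)$ and $\mathbf{x}=e_i$, which gives $P=a_i\ge 0$.

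\textbf{Step 2 (Calder\'on).} By Calder\'on's theorem, every PSD $m\times 2$ biquadratic form is SOS. Hence $P_1$ is SOS, and by the definition of $\mathrm{BSR}(4,2)=m+1=5$ \cite{BPSV19} we have $\operatorname{sos}(P_1)\le 5$. Write $P_1=\sum_{r=1}^{5} \ell_r(\mathbf{x},y_1,y_2)^2$ with bilinear $\ell_r$.

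\textbf{Step 3 (the $y_3$ part).} We have
\[
y_3^2T(\mathbf{x})=\sum_{i=1}^4 \bigl(\sqrt{a_i}\,x_iy_3\bigr)^2 .
\]
This consists of at most four squares of bilinear forms, and fewer if some $a_i=0$. Combining with Step 2 gives $\operatorname{sos}(P)\le 5+4=9$.

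\textbf{Main difficulty.} The argument is essentially routine. The step needing care is Step 1 together with justifying Step 2, namely invoking Calder\'on's theorem so that $P_1$ is genuinely SOS and the bound $\mathrm{BSR}(4,2)=5$ applies to it, rather than merely being PSD. No cancellation between the pieces is needed, because the two sums of squares use disjoint monomial supports (terms with $y_3$ versus terms without $y_3$), so their sum is exactly $P$.
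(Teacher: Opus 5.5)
Your proposal is correct and matches the paper's proof essentially step for step. Like the paper, you set $y_3=0$ to see that $P_1$ is a PSD $4\times 2$ form, invoke Calder\'on's theorem together with $\mathrm{BSR}(4,2)=5$ to write $P_1$ with five squares, and write $y_3^2T$ as the four squares $(\sqrt{a_i}\,x_iy_3)^2$, for a total of $5+4=9$. Your remark that $a_i\ge 0$ follows from positive semidefiniteness is a harmless extra.
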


\begin{proof}
Write \(P = P_1 + y_3^2 T\) as above. Setting \(y_3=0\) shows that \(P_1\) is PSD. A classical result of Calder  n \cite{Ca73} states that every \(m\times 2\) PSD biquadratic form is SOS; moreover, by \cite{BPSV19} we have \(\operatorname{BSR}(4,2)=5\). Hence there exist bilinear forms \(\ell_1,\dots,\ell_5\) in \(\mathbf{x}\) and \((y_1,y_2)\) such that
\[
P_1(\mathbf{x},y_1,y_2) = \sum_{k=1}^{5} \ell_k(\mathbf{x},y_1,y_2)^2.
\]

The term \(y_3^2 T\) is already a sum of squares:
\[
y_3^2 \sum_{i=1}^{4} a_i x_i^2 = \sum_{i=1}^{4} \bigl(\sqrt{a_i}\,x_i y_3\bigr)^2.
\]

Therefore
\[
P = \sum_{k=1}^{5} \ell_k(\mathbf{x},y_1,y_2)^2 \;+\; \sum_{i=1}^{4} \bigl(\sqrt{a_i}\,x_i y_3\bigr)^2,
\]
which is a sum of at most \(5+4=9\) squares. Hence \(\operatorname{sos}(P)\le 9\).
\end{proof}

\begin{remark}
The bound \(9\) is the same as we would obtain for diagonal forms, which are a special case of \(y\)-deficient forms (any diagonal form satisfies the condition for every choice of \(j_0\)). For diagonal forms, an alternative proof can be given by splitting off the fourth row instead of a column: write \(P = P_{123} + R\) where \(P_{123}\) is the \(3\times 3\) subform on rows \(1\)-\(3\) and \(R\) involves only row \(4\). Then by \(\operatorname{BSR}(3,3)=6\) from \cite{BSSV22}, \(P_{123}\) can be expressed with at most six squares, and \(R\) contributes three squares, again yielding \(\operatorname{sos}(P)\le 9\). This alternative proof highlights the flexibility of the splitting technique.
\end{remark}

\begin{remark}
If the deficient variable were \(y_1\) or \(y_2\), the same proof applies after renaming the variables. Thus Theorem~\ref{thm:y-deficient} holds for any \(4\times 3\) PSD biquadratic form that is deficient in some \(y\)-variable.
\end{remark}

\begin{remark}
The definition of \(y\)-deficiency is not symmetric in \(x\) and \(y\). One might attempt to define an analogous class of \textbf{\(x\)-deficient} forms, where a distinguished variable \(x_{i_0}\) appears only in pure square terms \(x_{i_0}^2 y_j^2\). Such forms would admit a decomposition \(P = P_1(x',y) + x_{i_0}^2 U(y)\), where \(P_1\) is a \(3 \times 3\) form. However, by the classical result of Choi \cite{Ch75}, not every \(3 \times 3\) PSD biquadratic form is SOS. Therefore, without additional assumptions, we cannot guarantee an SOS decomposition for \(P_1\), and the approach used for \(y\)-deficient forms does not extend. This asymmetry reflects the fundamental difference between \(m \times 2\) forms (always SOS by Calder  n's theorem \cite{Ca73}) and \(3 \times 3\) forms (where counterexamples exist).
\end{remark}

\subsection{An Example Illustrating the Bound}

Theorem~\ref{thm:y-deficient} establishes an upper bound of \(9\) for all \(4\times 3\) \(y\)-deficient biquadratic forms. To illustrate that this bound could be sharp (i.e., that there may exist forms with SOS rank exactly \(9\)), we present a concrete diagonal example. Diagonal forms are a special case of \(y\)-deficient forms (they satisfy the deficiency condition for any choice of the distinguished variable).

Consider the \(4\times 3\) diagonal form
\[
P(\mathbf{x},\mathbf{y}) = \sum_{i=1}^{3}\sum_{j=1}^{3} a_{ij}x_i^2y_j^2 \;+\; \sum_{j=1}^{3} b_j x_4^2y_j^2,
\]
with coefficients
\[
(a_{ij})_{1\le i,j\le 3} = \begin{pmatrix}
1 & 2 & 1\\
3 & 7 & 1\\
1 & 1 & 2
\end{pmatrix},
\qquad
(b_1,b_2,b_3) = (1,\,1,\,1).
\]

The \(3\times 3\) block is exactly the matrix used in \cite[Theorem~3.1]{QCX26} to illustrate a splitting argument; there it was shown that this block admits a decomposition into seven squares. However, by the much stronger result \(\operatorname{BSR}(3,3)=6\) from \cite{BSSV22}, \emph{every} \(3\times 3\) SOS form can be expressed with at most six squares. Consequently, we can obtain a decomposition of the whole \(4\times 3\) form into at most nine squares using the alternative row?splitting proof mentioned in Remark~3.2.

\begin{Prop}\label{prop:example}
The diagonal form defined above satisfies \(\operatorname{sos}(P)\le 9\).
\end{Prop}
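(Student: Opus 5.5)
The plan is to prove Proposition~\ref{prop:example} by the row split described in Remark~3.2. We write
\[
P = P_{123}(x_1,x_2,x_3,\mathbf{y}) + R(x_4,\mathbf{y}),\qquad
P_{123}=\sum_{i,j=1}^{3} a_{ij}x_i^2y_j^2,\quad R = x_4^2(y_1^2+y_2^2+y_3^2),
\]
where \(P_{123}\) collects the \(3\times 3\) block on rows \(1\)--\(3\) and \(R\) collects the terms in row \(4\).

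First, \(P_{123}\) is SOS. All coefficients \(a_{ij}\) are positive, so
\[
P_{123}=\sum_{i,j=1}^{3}\bigl(\sqrt{a_{ij}}\,x_iy_j\bigr)^2 .
\]
This shows \(P_{123}\) is an SOS \(3\times 3\) biquadratic form. By \(\operatorname{BSR}(3,3)=6\) \cite{BSSV22}, it therefore admits a representation with at most six squares of bilinear forms. If one prefers a concrete decomposition, \cite[Theorem~3.1]{QCX26} gives seven explicit squares. That would only yield the weaker bound \(10\), so the appeal to \cite{BSSV22} is the essential input.

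Second, \(R\) needs no further work:
\[
R=(x_4y_1)^2+(x_4y_2)^2+(x_4y_3)^2,
\]
which is three squares of bilinear forms.

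Adding the two representations gives \(\operatorname{sos}(P)\le 6+3=9\). The argument therefore amounts to checking the splitting and invoking \(\operatorname{BSR}(3,3)=6\) correctly. That result is a maximum over all SOS \(3\times3\) forms, so it applies directly to \(P_{123}\) once we know \(P_{123}\) is SOS, which we have just verified.

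An alternative route, consistent with Theorem~\ref{thm:y-deficient}, is to split off \(y_3\) instead. The \(4\times 2\) part in \(y_1,y_2\) uses \(\operatorname{BSR}(4,2)=5\), and the remaining terms \(x_i^2y_3^2\) give at most \(4\) squares, again yielding \(9\). Since \(P\) is diagonal it is \(y\)-deficient, so the proposition is also an immediate corollary of Theorem~\ref{thm:y-deficient}. I would present the row-split version to match the text.
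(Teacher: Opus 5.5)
Your proposal is correct and follows the paper's proof essentially verbatim: split off row $4$, bound the $3\times 3$ block by six squares via $\operatorname{BSR}(3,3)=6$ \cite{BSSV22}, and add the three squares $(x_4y_j)^2$. Your justification that the block is SOS, writing it directly as $\sum_{i,j}(\sqrt{a_{ij}}\,x_iy_j)^2$, is cleaner than the paper's ``PSD hence SOS'' step, since that inference fails for general $3\times 3$ forms by Choi's example \cite{Ch75}.
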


\begin{proof}
Let \(Q(\mathbf{x}',\mathbf{y}) = \sum_{i=1}^{3}\sum_{j=1}^{3} a_{ij}x_i^2y_j^2\) be the \(3\times 3\) diagonal subform, where \(\mathbf{x}' = (x_1,x_2,x_3)\). Since all \(a_{ij}>0\), \(Q\) is positive semidefinite and hence a sum of squares. By \(\operatorname{BSR}(3,3)=6\) \cite{BSSV22}, there exist bilinear forms \(\ell_1,\ldots,\ell_6\) in \(\mathbf{x}'\) and \(\mathbf{y}\) such that
\[
Q(\mathbf{x}',\mathbf{y}) = \sum_{k=1}^{6} \ell_k(\mathbf{x}',\mathbf{y})^2.
\]
The remaining terms involving \(x_4\) are each a single square:
\[
\sum_{j=1}^{3} b_j x_4^2 y_j^2 = \sum_{j=1}^{3} \bigl(\sqrt{b_j}\,x_4y_j\bigr)^2.
\]
Hence
\[
P = \sum_{k=1}^{6} \ell_k(\mathbf{x}',\mathbf{y})^2 + \sum_{j=1}^{3} \bigl(\sqrt{b_j}\,x_4y_j\bigr)^2,
\]
which is a sum of nine squares. Thus \(\operatorname{sos}(P)\le 9\).
\end{proof}

\begin{remark}
The bound \(\mathrm{sos}(P) \le 9\) for \(y\)-deficient forms is not necessarily sharp within the subclass of diagonal forms, but the example in Proposition~3.6 does not settle this question. Although the form is diagonal, it is not simple (the coefficients are arbitrary positive numbers, whereas simple forms as defined in Section~2 correspond to bipartite graphs with unit coefficients). Therefore its SOS rank could potentially exceed the maximum for simple forms, and it might even attain the bound \(9\) if the \(3 \times 3\) subform \(Q\) has SOS rank \(6\) and the three terms involving \(x_4\) cannot be absorbed into a more efficient decomposition. Whether such a form actually achieves rank \(9\) remains an open problem.

The question of whether there exists a \(4 \times 3\) \(y\)-deficient form (necessarily non-simple) with SOS rank exactly \(9\) is interesting for future research. Such a form would require that the \(4 \times 2\) component \(P_1\) has SOS rank \(5\) (the maximum for \(4 \times 2\) forms) and that the four squares from the deficient variable cannot be absorbed into the decomposition of \(P_1\).
\end{remark}

\section{An Eight-Square Form and a New Lower Bound for \(\operatorname{BSR}(4,3)\)}
The simple form \(P_{4,3,7}\) attains SOS rank \(7\) (Theorem~\ref{thm:4x3-tight}), giving the lower bound \(\operatorname{BSR}(4,3)\ge 7\). To investigate whether higher ranks are possible, we consider a natural modification of \(P_{4,3,7}\) by adding a carefully chosen square term.
The following form will be shown to require exactly eight squares, establishing the improved bound \(\operatorname{BSR}(4,3)\ge 8\).
Define
\[
Q(\mathbf{x},\mathbf{y}) = P_{4,3,7}(\mathbf{x},\mathbf{y}) + (x_4y_2 + x_1y_3)^2,
\]
where
\[
P_{4,3,7}= x_1^2y_1^2 + x_2^2y_2^2 + x_3^2y_3^2 + x_1^2y_2^2 + x_2^2y_3^2 + x_3^2y_1^2 + x_4^2y_1^2.
\]
Expanding \((x_4y_2+x_1y_3)^2 = x_4^2y_2^2 + x_1^2y_3^2 + 2x_1x_4y_2y_3\), we see that \(Q\) contains the nine pure square terms
\[
(1,1),\;(1,2),\;(1,3),\;(2,2),\;(2,3),\;(3,1),\;(3,3),\;(4,1),\;(4,2)
\]
each with coefficient \(1\), together with the single cross term \(2x_1x_4y_2y_3\).

\begin{Thm}[An eight-square form and a new lower bound for \(\operatorname{BSR}(4,3)\)]\label{thm:Q_rank8}
The form
\[
Q(\mathbf{x},\mathbf{y}) = P_{4,3,7}(\mathbf{x},\mathbf{y}) + (x_4y_2 + x_1y_3)^2,
\]
where
\[
P_{4,3,7}= x_1^2y_1^2 + x_2^2y_2^2 + x_3^2y_3^2 + x_1^2y_2^2 + x_2^2y_3^2 + x_3^2y_1^2 + x_4^2y_1^2,
\]
satisfies \(\operatorname{sos}(Q)=8\). Consequently, \(\operatorname{BSR}(4,3)\ge 8\).
\end{Thm}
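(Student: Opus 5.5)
The upper bound $\operatorname{sos}(Q)\le 8$ is immediate from the definition: $Q$ is $P_{4,3,7}$, which is a sum of seven squares $x_i^2y_j^2$, plus the single square $(x_4y_2+x_1y_3)^2$. The whole task is the lower bound $\operatorname{sos}(Q)\ge 8$. For this I will use the Gram-matrix description. Write $Q=\mathbf m^\top G\,\mathbf m$, where $\mathbf m$ is the vector of the twelve bilinear monomials $x_iy_j$ and $G\succeq 0$. Then $\operatorname{sos}(Q)$ is the minimum of $\operatorname{rank} G$ over all feasible PSD Gram matrices $G$. So it suffices to show that every PSD Gram matrix of $Q$ has rank at least $8$.

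\textbf{Step 1: reduce to the support.} The monomials $x_2y_1,\ x_3y_2,\ x_4y_3$ do not occur squared in $Q$, so the corresponding diagonal entries of $G$ vanish. Since $G\succeq 0$, their whole rows and columns vanish as well. This leaves a $9\times 9$ matrix indexed by $(1,1),(1,2),(1,3),(2,2),(2,3),(3,1),(3,3),(4,1),(4,2)$, with all diagonal entries equal to $1$.

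Next I describe the off-diagonal entries.
\begin{itemize}
\item Two indices in the same $x$-row, or in the same $y$-column, give a product $x_i^2y_jy_l$ or $x_ix_ky_j^2$. Each such product arises from exactly one pair of monomials, so the entry equals the coefficient of that product in $Q$, which is $0$.
\item For $i\ne k$ and $j\ne l$, the coefficient of $x_ix_ky_jy_l$ in $Q$ equals $2\bigl(G_{(ij),(kl)}+G_{(il),(kj)}\bigr)$. If one of these two pairs involves a missing monomial, the other entry is forced.
\end{itemize}
A free parameter survives only for the $4$-cycles in the support graph. These are rows $\{1,2\}\times$cols $\{2,3\}$, rows $\{1,3\}\times$cols $\{1,3\}$, and rows $\{1,4\}\times$cols $\{1,2\}$. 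They give parameters $t_1,t_2,t_3$, each appearing as $\pm t$ in the two complementary pairs:
\begin{itemize}
\item $G_{(12),(23)}=t_1$ and $G_{(13),(22)}=-t_1$;
\item $G_{(11),(33)}=t_2$ and $G_{(13),(31)}=-t_2$;
\item $G_{(11),(42)}=t_3$ and $G_{(12),(41)}=-t_3$.
\end{itemize}
Finally, the cross term $2x_1x_4y_2y_3$ comes only from the pair $(1,3),(4,2)$, because $(4,3)$ is absent. This forces $G_{(13),(42)}=1$. Every other off-diagonal entry is $0$.

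\textbf{Step 2: the key rigidity.} This is the step I expect to carry the argument. The principal $2\times 2$ submatrix on $\{(1,3),(4,2)\}$ is $\begin{pmatrix}1&1\\1&1\end{pmatrix}$, which is singular. For a PSD matrix, $G_{aa}G_{bb}=G_{ab}^2$ forces the columns $a$ and $b$ to be proportional, here equal. So rows $(1,3)$ and $(4,2)$ of $G$ must coincide, and comparing them entry by entry kills all three parameters:
\begin{itemize}
\item Row $(4,2)$ has $t_3$ in column $(1,1)$, while row $(1,3)$ has $0$ there (same $x$-row). Hence $t_3=0$.
\item Row $(1,3)$ has $-t_1$ in column $(2,2)$, while row $(4,2)$ has $0$ there (same $y$-column). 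Hence $t_1=0$.
\item Row $(1,3)$ has $-t_2$ in column $(3,1)$, while row $(4,2)$ has $0$ there, because the pair $(4,2),(3,1)$ is forced to $0$ by the missing monomial $(3,2)$. Hence $t_2=0$.
\end{itemize}
The main care needed is to double-check each of these three forced entries of the Gram matrix against the monomial bookkeeping of Step 1.

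\textbf{Step 3: conclude.} With $t_1=t_2=t_3=0$, the PSD Gram matrix is unique. It is $I_7$ on the seven remaining indices, direct sum with the block $\begin{pmatrix}1&1\\1&1\end{pmatrix}$. Its rank is $7+1=8$. Therefore every SOS representation of $Q$ uses at least $8$ squares, so $\operatorname{sos}(Q)=8$. Since $Q$ is a $4\times 3$ SOS biquadratic form, this gives $\operatorname{BSR}(4,3)\ge 8$.
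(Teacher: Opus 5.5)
Your proof is correct and is essentially the paper's argument in Gram-matrix language. Your entries $G_{(ij),(kl)}$ are the paper's inner products $\mathbf{v}_{ij}\cdot\mathbf{v}_{kl}$, your singular $2\times 2$ minor is its Cauchy--Schwarz step $\mathbf{v}_{13}=\mathbf{v}_{42}$, and your elimination of $t_1,t_2,t_3$ matches its indirect orthogonalities $\mathbf{v}_{12}\perp\mathbf{v}_{23}$, $\mathbf{v}_{11}\perp\mathbf{v}_{33}$, $\mathbf{v}_{12}\perp\mathbf{v}_{41}$; organizing the free entries by the three $4$-cycles is a bit cleaner, and it also shows the PSD Gram matrix of $Q$ is unique.
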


\begin{proof}
\textbf{Upper bound (explicit decomposition).}
The following eight squares reproduce \(Q\) exactly:
\[
\begin{aligned}
Q &= (x_1y_1)^2 + (x_4y_1)^2 + (x_1y_2)^2 + (x_1y_3 + x_4y_2)^2 \\
&\quad + (x_2y_2)^2 + (x_2y_3)^2 + (x_3y_1)^2 + (x_3y_3)^2.
\end{aligned}
\]
A direct expansion confirms that all terms match: the first three squares give \(x_1^2y_1^2 + x_4^2y_1^2 + x_1^2y_2^2\); the term \((x_1y_3+x_4y_2)^2\) supplies \(x_1^2y_3^2 + x_4^2y_2^2\) and the unique cross term \(2x_1x_4y_2y_3\); the remaining four squares provide \(x_2^2y_2^2 + x_2^2y_3^2 + x_3^2y_1^2 + x_3^2y_3^2\). No other pure squares or cross terms appear. Hence \(\operatorname{sos}(Q)\le 8\).

\textbf{Lower bound (impossibility of seven squares).}
Assume, for contradiction, that \(Q = \sum_{k=1}^{7} \ell_k(\mathbf{x},\mathbf{y})^2\) with bilinear forms
\[
\ell_k = \sum_{i=1}^{4}\sum_{j=1}^{3} a_{ij}^{(k)} x_i y_j,\qquad a_{ij}^{(k)}\in\mathbb{R}.
\]
Expanding and comparing coefficients with the explicit form of \(Q\) yields:
\begin{itemize}
 \item For the nine present pure square terms, \(\sum_{k=1}^{7} (a_{ij}^{(k)})^2 = 1\).
 \item For the three missing pure square terms \((2,1),(3,2),(4,3)\), we must have \(a_{ij}^{(k)}=0\) for all \(k\). Thus, the corresponding coefficient vectors are zero:
 \[
 \mathbf{v}_{21} = \mathbf{0}, \quad \mathbf{v}_{32} = \mathbf{0}, \quad \mathbf{v}_{43} = \mathbf{0}.
 \]
 \item The only cross term is \(2x_1x_4y_2y_3\), so \(\sum_{k=1}^{7} a_{13}^{(k)} a_{42}^{(k)} = 1\), while all other cross coefficients vanish.
\end{itemize}

Define vectors \(\mathbf{v}_{ij} = (a_{ij}^{(1)},\dots,a_{ij}^{(7)})\in\mathbb{R}^7\) for the nine present index pairs. Then \(\|\mathbf{v}_{ij}\| = 1\) and \(\mathbf{v}_{13}\cdot\mathbf{v}_{42} = 1\). By the Cauchy-Schwarz inequality, \(\mathbf{v}_{13} = \mathbf{v}_{42}\); we denote this common unit vector by \(\mathbf{w}\):
\[
\mathbf{w} := \mathbf{v}_{13} = \mathbf{v}_{42}.
\]
We now show that the set of eight vectors \(\mathcal{S} = \{\mathbf{w}, \mathbf{v}_{11}, \mathbf{v}_{12}, \mathbf{v}_{22}, \mathbf{v}_{23}, \mathbf{v}_{31}, \mathbf{v}_{33}, \mathbf{v}_{41}\}\) is mutually orthogonal in \(\mathbb{R}^7\). The orthogonality follows from the vanishing coefficients of absent monomials \(x_i x_p y_j y_q\), which imply \(\mathbf{v}_{ij}\cdot\mathbf{v}_{pq} + \mathbf{v}_{iq}\cdot\mathbf{v}_{pj} = 0\).

\medskip
\noindent\textit{Category A: Direct Orthogonality (14 pairs).}
These arise from absent mixed square terms where vectors share a row or column:
\begin{itemize}
 \item \(\mathbf{w} \perp \{\mathbf{v}_{11}, \mathbf{v}_{12}\}\) via absent \(x_1^2 y_1 y_3, x_1^2 y_2 y_3\); \(\mathbf{w} \perp \{\mathbf{v}_{22}, \mathbf{v}_{33}\}\) via absent \(x_2 x_4 y_2^2, x_1 x_3 y_3^2\) (using \(\mathbf{w}=\mathbf{v}_{42}, \mathbf{v}_{13}\)); \(\mathbf{w} \perp \mathbf{v}_{41}\) via absent \(x_4^2 y_1 y_2\).
 \item \(\mathbf{v}_{11} \perp \{\mathbf{v}_{12}, \mathbf{v}_{31}, \mathbf{v}_{41}\}\) via absent \(x_1^2 y_1 y_2, x_1 x_3 y_1^2, x_1 x_4 y_1^2\).
 \item \(\mathbf{v}_{12} \perp \mathbf{v}_{22}\) via absent \(x_1 x_2 y_2^2\); \(\mathbf{v}_{22} \perp \mathbf{v}_{23}\) via absent \(x_2^2 y_2 y_3\).
 \item \(\mathbf{v}_{23} \perp \mathbf{v}_{33}\) via absent \(x_2 x_3 y_3^2\); \(\mathbf{v}_{33} \perp \mathbf{v}_{31}\) via absent \(x_3^2 y_1 y_3\).
 \item \(\mathbf{v}_{31} \perp \mathbf{v}_{41}\) via absent \(x_3 x_4 y_1^2\).
\end{itemize}

\noindent\textit{Category B: Indirect Orthogonality via Zero Vectors (14 pairs).}
These arise from absent cross terms where the complementary pair involves a zero vector (\(\mathbf{v}_{21}, \mathbf{v}_{32},\) or \(\mathbf{v}_{43}\)):
\begin{itemize}
 \item \(\mathbf{w} \perp \mathbf{v}_{31}\): From \(x_3 x_4 y_1 y_2=0 \implies \mathbf{v}_{31}\cdot\mathbf{w} + \mathbf{v}_{32}\cdot\mathbf{v}_{41}=0\) (since \(\mathbf{v}_{32}=\mathbf{0}\)).
 \item \(\mathbf{v}_{11} \perp \{\mathbf{v}_{22}, \mathbf{v}_{23}\}\): From \(x_1 x_2 y_1 y_2=0, x_1 x_2 y_1 y_3=0\) using \(\mathbf{v}_{21}=\mathbf{0}\).
 \item \(\mathbf{v}_{11} \perp \mathbf{v}_{33}\): From \(x_1 x_3 y_1 y_3=0 \implies \mathbf{v}_{11}\cdot\mathbf{v}_{33} + \mathbf{w}\cdot\mathbf{v}_{31}=0\); since \(\mathbf{w}\perp\mathbf{v}_{31}\) (above), \(\mathbf{v}_{11}\perp\mathbf{v}_{33}\).
 \item \(\mathbf{v}_{12} \perp \{\mathbf{v}_{31}, \mathbf{v}_{33}\}\): From \(x_1 x_3 y_1 y_2=0, x_1 x_3 y_2 y_3=0\) using \(\mathbf{v}_{32}=\mathbf{0}\).
 \item \(\mathbf{v}_{12} \perp \mathbf{v}_{41}\): From \(x_1 x_4 y_1 y_2=0 \implies \mathbf{v}_{11}\cdot\mathbf{w} + \mathbf{v}_{12}\cdot\mathbf{v}_{41}=0\); since \(\mathbf{v}_{11}\perp\mathbf{w}\), \(\mathbf{v}_{12}\perp\mathbf{v}_{41}\).
 \item \(\mathbf{v}_{12} \perp \mathbf{v}_{23}\): From \(x_1 x_2 y_2 y_3=0 \implies \mathbf{v}_{12}\cdot\mathbf{v}_{23} + \mathbf{w}\cdot\mathbf{v}_{22}=0\); since \(\mathbf{w}\perp\mathbf{v}_{22}\) (Cat A), \(\mathbf{v}_{12}\perp\mathbf{v}_{23}\).
 \item \(\mathbf{v}_{22} \perp \{\mathbf{v}_{31}, \mathbf{v}_{33}, \mathbf{v}_{41}\}\): From \(x_2 x_3 y_1 y_2=0, x_2 x_3 y_2 y_3=0, x_2 x_4 y_1 y_2=0\) using \(\mathbf{v}_{21}=\mathbf{0}\) or \(\mathbf{v}_{32}=\mathbf{0}\).
 \item \(\mathbf{v}_{23} \perp \{\mathbf{v}_{31}, \mathbf{v}_{41}\}\): From \(x_2 x_3 y_1 y_3=0, x_2 x_4 y_1 y_3=0\) using \(\mathbf{v}_{21}=\mathbf{0}\) or \(\mathbf{v}_{43}=\mathbf{0}\).
 \item \(\mathbf{v}_{33} \perp \mathbf{v}_{41}\): From \(x_3 x_4 y_1 y_3=0\) using \(\mathbf{v}_{43}=\mathbf{0}\).
\end{itemize}

Thus, all 8 vectors in \(\mathcal{S}\) are nonzero and pairwise orthogonal in \(\mathbb{R}^7\). This implies they are linearly independent, which is impossible in a 7-dimensional space. The contradiction proves that no 7-square decomposition exists. Therefore, \(\operatorname{sos}(Q)=8\).
\end{proof}

\begin{remark}
The form \(Q\) is not simple (it contains the cross term \(2x_1x_4y_2y_3\)) and is not \(y\)-deficient. It demonstrates that perturbing the extremal simple form \(P_{4,3,7}\) can increase the SOS rank, raising the lower bound for \(\operatorname{BSR}(4,3)\) from \(7\) to \(8\).
\end{remark}

\begin{remark}
The construction shows that the extremal simple form \(P_{4,3,7}\) can be perturbed by a carefully chosen square to increase the SOS rank by one, while a different perturbation (such as \((x_1y_3+x_2y_1)^2\)) left the rank unchanged. This sensitivity suggests that the exact value of \(\operatorname{BSR}(4,3)\) may lie strictly between \(8\) and \(11\), and further investigation of non-simple, non-\(y\)-deficient forms is needed.
\end{remark}

\section{Conclusion}

In this paper, we have investigated the maximum sum-of-squares rank for two important subclasses of \(4\times 3\) biquadratic forms: simple forms and \(y\)-deficient forms. Our main results provide exact determinations and improved bounds that significantly narrow the gap between the known lower bound \(z(4,3) = 7\) and the general upper bound \(mn - 1 = 11\) for arbitrary \(4\times 3\) SOS biquadratic forms.

For simple biquadratic forms, we completely characterized the extremal case. By analyzing the underlying bipartite graph structure and exploiting the relationship between \(C_4\)-free graphs and SOS rank established in [5], we proved in Theorem 2.6 that the maximum achievable SOS rank is exactly 7, attained by the form \(P_{4,3,7}\). This settles the question for simple forms and demonstrates that the lower bound from the Zarankiewicz number is sharp within this subclass.

For \(y\)-deficient biquadratic forms, a class that allows cross terms among two of the three \(y\)-variables, we proved in Theorem 3.2 an upper bound of 9. The proof combines Calder  n's theorem on \(m\times 2\) forms [3] with the known value \(\mathrm{BSR}(4,2) = 5\) from [1]. These results show that the presence of cross terms does not necessarily increase the SOS rank, provided they are confined to a \(4\times 2\) subsystem.

Our primary contribution is Theorem 4.1, where we constructed an explicit \(4\times 3\) biquadratic form \(Q\) and proved that \(\mathrm{sos}(Q) = 8\). This improves the general lower bound for \(\mathrm{BSR}(4,3)\) from 7 to 8 and represents the first constructive progress on this problem in recent years. The form \(Q\) is neither simple nor \(y\)-deficient, confirming that any form achieving a rank greater than 8 must lie outside these two subclasses.

The results highlight an important phenomenon: the maximum SOS rank for \(4\times 3\) biquadratic forms depends crucially on the algebraic structure. Simple forms cannot exceed rank 7; \(y\)-deficient forms satisfy rank at most 9; and we have now shown that rank 8 is achievable. The interval of possible values for \(\mathrm{BSR}(4,3)\) is now
\[
8 \leq \mathrm{BSR}(4,3) \leq 11.
\]

Several directions for future research emerge naturally from this work:

\begin{enumerate}
    \item \textbf{Closing the gap for \(y\)-deficient forms.} The question of whether there exists a \(4\times 3\) \(y\)-deficient form with SOS rank exactly 9 remains open. For diagonal forms this reduces to the existence of a \(3\times 3\) diagonal form with SOS rank exactly 6. If such a form exists, then our bound is sharp; if all \(3\times 3\) diagonal forms have rank at most 5, then the maximum for \(4\times 3\) diagonal forms would be at most 8.

    \item \textbf{Exploring other structured subclasses.} Beyond simple and \(y\)-deficient forms, one could investigate forms with other sparsity patterns, such as those corresponding to bipartite graphs with prescribed girth or degree constraints. The interplay between graph-theoretic properties and SOS rank remains a rich area for exploration.

    \item \textbf{Towards the determination of \(\mathrm{BSR}(4,3)\).} The ultimate goal is to determine the maximum SOS rank for arbitrary \(4 \times 3\) biquadratic forms. Our results show that any form achieving rank greater than 7 must be non-simple, and any form achieving rank greater than 9 must be neither diagonal nor \(y\)-deficient. With the lower bound now raised to 8, the central open problem is to determine whether the true value is 8, 9, 10, or 11. Constructing explicit examples with ranks 9, 10, or 11  or proving that such ranks are impossible  remains a challenging but essential task.

    \item \textbf{Extension to larger formats.} The methods developed here, particularly the use of splitting arguments and known bounds for smaller formats \((3 \times 3, 4 \times 2)\), may generalize to \(m \times n\) forms with larger \(m, n\). The Zarankiewicz number \(z(m, n)\) provides a lower bound, but the exact maximum SOS rank for larger formats remains largely unexplored.
\end{enumerate}

In summary, this paper provides a comprehensive analysis of SOS ranks for two fundamental subclasses of \(4 \times 3\) biquadratic forms, establishing exact results for simple forms and improved bounds for \(y\)-deficient forms. Most importantly, it introduces the first constructive improvement to the general lower bound in decades, proving that \(\mathrm{BSR}(4,3) \ge 8\). These findings contribute to the broader program of understanding the sum-of-squares representation of biquadratic forms and their connections to combinatorial structures such as bipartite graphs and Zarankiewicz numbers.

\bigskip

\noindent\textbf{Acknowledgement}
We are thankful to Professor Greg Blekherman who told us that he believes $BSR(4,3) \ge 8$.
This work was partially supported by Research Center for Intelligent Operations Research, The Hong Kong Polytechnic University (4-ZZT8), the National Natural Science Foundation of China (Nos. 12471282 and 12131004), and Jiangsu Provincial Scientific Research Center of Applied Mathematics (Grant No. BK20233002).

\medskip

\noindent\textbf{Data availability}
No datasets were generated or analysed during the current study.

\medskip

\noindent\textbf{Conflict of interest} The authors declare no conflict of interest.


\begin{thebibliography}{10}

        \bibitem{BPSV19} G. Blekherman, D. Plaumann, R. Sinn and C. Vinzant, ``Low-{rank} sum-of-squares representations on varieties of minimal degree'', {\sl International Mathematics Research Notices \bf 2019} (2019) 33-54.

        \bibitem{BSSV22} G. Blekherman, R. Sinn, G. Smith and M. Velasco, ``Sums of squares and quadratic persistence on real projective varieties'', {\sl Journal of the European Mathematical Society \bf 24} (2021) 925-965.


		\bibitem{Ca73} A.P. Calder\'{o}n, ``A note on biquadratic forms'', {\sl Linear Algebra and Its Applications \bf 7} (1973) 175-177.
		
		\bibitem{Ch75} M.-D. Choi, ``Positive semidefinite biquadratic forms'', {\sl Linear Algebra and Its Applications \bf 12} (1975) 95-100.
		
		\bibitem{CQX26} C. Cui, L. Qi and Y. Xu, ``The Sum of squares rank of biquadratic forms and the Zarankiewicz number'', Feburary 2026, arXiv:2602.07844v2.

        \bibitem{Hurwitz1898} A. Hurwitz, ``\"Uber die Composition der quadratischen Formen'', {\sl Mathematische Annalen} \textbf{50} (1898), 177-188.
		
		
        \bibitem{QCX26} L. Qi, C. Cui and Y. Xu, ``Sum of squares decompositions and rank bounds for biquadratic forms'', {\sl Mathematics \bf 14} (2026) No. 635.

\end{thebibliography}
\end{document}